
\documentclass[12pt]{article}

\usepackage[english]{babel}

\usepackage[english]{babel}
\usepackage[dvips, final]{graphicx}
\usepackage[T1]{fontenc} 
\usepackage{amsmath}
\usepackage{amsfonts}
\usepackage{amssymb}
\usepackage{amsthm}
\usepackage{enumerate}
\usepackage{color}
\usepackage{epsfig}

\usepackage{makeidx}
\usepackage{xspace}
\usepackage{tabularx}
\usepackage{verbatim}
\usepackage[14pt]{extsizes}
\usepackage{geometry}
\usepackage{texnames}
\usepackage{fancyhdr}
\usepackage{cite}
\graphicspath{{:fig:}}

\sloppy



\RequirePackage{amsthm}
\theoremstyle{plain} 

\usepackage{amsmath, amsthm, amscd, amsfonts, amssymb, graphicx, color}

\textheight 22.5truecm \textwidth 14.5truecm
\setlength{\oddsidemargin}{0.35in}\setlength{\evensidemargin}{0.35in}

\setlength{\topmargin}{-.5cm}

\newtheorem{theorem}{Theorem}[section]
\newtheorem{lemma}[theorem]{Lemma}

\newtheorem{corollary}[theorem]{Corollary}
\theoremstyle{definition}

\theoremstyle{remark}
\newtheorem{remark}[theorem]{Remark}
\numberwithin{equation}{section}
\usepackage{tabularx}



\usepackage{cite}








\usepackage{dcolumn}
\usepackage{bm}


\theoremstyle{remark}


\def\@tocrmarg{3em}
\def\@pnumwidth{2em}


\pagestyle{fancy}
\lhead{} \chead{} \rhead{}
\lfoot{} \cfoot{\normalsize\thepage} \rfoot{}



\textwidth=15.5cm
\textheight=23cm
\topmargin=-1cm

\oddsidemargin=5mm

\begin{document}

\noindent\large{\bf Solution to the  Volterra Matrix  Equation\\ of the 1st kind
with Piecewise Continuous Kernels}\footnote{This work is partly supported by RFBR, project No. 11-08-00109,  Deutscher Akademischer Austauschdienst (DAAD), No.~A1200665. 
It is carried out within Federal Framework Programm ``Scientific and Scientific-Academic Staff of Innovative Russia'' within project No. 2012-1.2.2-12-000-1001-012.}\\

\normalsize
\noindent {\bf Denis Sidorov}\\

In this paper we address the
following matrix Volterra integral equation of the first kind
\begin{equation}
\int\limits_0^t K(t,s) x(s) ds = f(t), \, 0<t \leq T.
\label{eq1}
\end{equation}
We assume  $m \times m$ matrix kernel $K(t,s)$ 
 has jump discontinuities on curves $s=\alpha_i(t), \, i=1, ... , n-1$,
$0 \leq s \leq t \leq T$ as follows
 \begin{equation}
    K(t,s) = \left\{ \begin{array}{ll}
         \mbox{$K_1(t,s), \,\,\,\,\,\,\,\,\,\,\,\,\, {0} \leq s \leq \alpha_1(t)$}, \\
         \mbox{$K_2(t,s), \,\,\, \alpha_1(t) < s \leq \alpha_2(t)$}, \\
         \mbox{\,\,\,\,\dots \,\,\,\,\,\,\,\,\,\, \,\, \dots \dots} \\
         \mbox{$K_n(t,s), \,\,\, \alpha_{n-1}(t) < s \leq  { t}$}, \\
        \end{array} \right. \label{eq2} 
         \end{equation}
        $f(t) = (f_1(t), \dots , f_m(t))^{\prime},$
$x(t) = (x_1(t), \dots , x_m(t))^{\prime}.$
Matrices $K_i(t,s)$ are $m \times m.$ We assume that these matrices are
defined, continuous and have continuous derivatives wrt $t$ 
correspondingly in 
$D_i = \bigl\{ s,t | \alpha_{i-1}(t) < s \leq \alpha_i(t) \bigr\},$
$i=\overline{1,n}, $ $\alpha_0=0, $  $\alpha_n(t) = t.$
 Functions $f_i(t), \alpha_i(t)$ have continuous derivatives,
$f_i(0)=0,$
$\alpha_i(0)=0,$  $0< \alpha_1^{\prime}(0) < \alpha_2^{\prime}(0) < \cdots < \alpha_{n-1}^{\prime}(0) <1,$ 
$0< \alpha_1(t) < \alpha_2(t) < \cdots < \alpha_{n-1}(t) <t$
for $t \in (0,T].$
In this chapter we search for the continuous solutions
of the equation \eqref{eq1}
for $t \in (0,T^{\prime}],$ where $0< T^{\prime} \leq T,$
$\lim\limits_{t\rightarrow +0} x(t)$ can also be infinite.
We assume below that each matrix $K_i(t,s), \, i=\overline{1,n}$
has continuously differentiable wrt $t$ extension on the interval $0\leq s \leq t \leq T$.
Homogeneous systems can have nontrivial solutions. 
Differentiation of the system\eqref{eq1} wrt $t$ in contrast to the classic
case \cite{lit6, lit9, lit13} lead us to the new class of the Volterra integral
equations with functionally perturbed argument. That is the reason why the 
conventional techniques are not applicable for the system \eqref{eq1} 
and studies of the systems with jump discontinuous kernels 
\eqref{eq2} are theoretically and practically interesting object to study.
Operator equations with functionally perturbed argument are addressed in  \cite{lit2, lit12}.   

The paper is organized as follows.
In section 1 we propose the algorithm for construction of the logarithmic
power asymptotics
\begin{equation}
\hat{x}(t) = \sum\limits_{i=0}^N x_i(\ln t) t^i
\label{eq3}
\end{equation}
of the desired continuous solutions of the system \eqref{eq1}. 
In the section 2 we proof theorem of existance of the parametric families
of solutions to the system \eqref{eq1}. Finally we derive
the sufficient conditions for existance and uniqueness of continuous 
solution.

\section{Construction of asymptotic approximation
of solutions to inhomogeneous system }

Let the following condition be fuilfilled
\begin{enumerate} [{\bf (A)}]
\item Matrices $\mathcal{P}_i = \sum\limits_{\nu+\mu = 1}^N K_{i \nu \mu} t^{\nu} s^{\mu}, \, i=\overline{1,n}$ exist,
vector-function $f^N(t) = \sum\limits_{\nu=1}^N f_{\nu} t^{\nu},$
and polynomials $\alpha_i^N(t) = \sum\limits_{\nu = 1}^N \alpha_{i \nu} t^{\nu}, i=\overline{1,n-1},$
where
$0< \alpha_{11} < \alpha_{21}  < \dots < \alpha_{n-1,1} <1,$
such as for $t\rightarrow +0,$ $s\rightarrow +0$ the following estimates are fulfilled $ K_i(t,s) -\mathcal{P}_i(t,s) = \mathcal{O}((t+s)^{N+1}), \, i=\overline{1,n},$ 
$f(t) - f^N(t) = \mathcal{O}(t^{N+1}),$  $\alpha_i(t) - \alpha_i^N(t) = \mathcal{O}(t^{N+1}), i=\overline{1,n-1}.$
 
\end{enumerate}

Expansions in powers of $t, s$ in {\bf (A)} we call below as ``Taylor polynomials''
of the corresponding elements.
Let us introduce the matrix
\begin{equation}
B(j) = K_n(0,0) + \sum\limits_{i=1}^{n-1} (\alpha_i^{\prime}(0))^{1+j}(K_i(0,0)-K_{i+1}(0,0))
\label{eq4}
\end{equation}
and the following algebraic equation
$$L(j) \stackrel{\mathrm{def}}{=} \det B(j) = 0. $$
We call  it as {\it characteristic equation}
of the system of integral equations \eqref{eq1}.
Since $f(0)=0$ matrices $K_i(t,s)$ and vector $f(t)$
have continuous derivatives wrt $t,$
then differentiation of the system \eqref{eq1}
lead us to the equivalent system of integral-functional equations

\begin{equation}
F(x) \stackrel{\mathrm{def}}{=} K_n(t,t)x(t) +\sum\limits_{i=1}^{n-1}
\alpha_i^{\prime}(t)\bigl\{ K_i(t,\alpha_i(t)) - 
\label{eq5}
\end{equation}
$$-K_{i+1}(t,\alpha_i(t)) \bigr\}x(\alpha_i(t))+\sum\limits_{i=1}^n \int\limits_{\alpha_{i-1}(t)}^{\alpha_i(t) } K_i^{(1)}(t,s)x(s)ds - f^{\prime}(t)=0, $$
where $\alpha_0=0, \alpha_n(t)=t.$

Here we do not assume that homogenius system of  \eqref{eq1},
has only the trivial solution. Hence the homogeneous integral-functional system corresponding to 
 \eqref{eq5} can have nontrivial solutions. Here we follow \cite{lit2, lit12} 
and  seek an asymptotic approximation of a particular solution of the inhomogeneous
equation \eqref{eq5} as following polynomial
\begin{equation}
\hat{x}(t) = \sum\limits_{j=0}^N x_j (\ln t) t^j.
\label{eq6}
\end{equation}
Let us demonstrate that coefficients $x_j$ depend on $\ln t$ and free parameters
in general irregular case. That is in line with possibility of nontrivial solutions
existance of the  homogeneous system.

The regular and irregular cases are possible when compute the 
coefficients  $x_j.$

\subsection{Regular case: $L(j) \neq 0,$ $j\in (0,1,\hdots ,N)$}

In this case the coefficients $x_j$ are constant vectors from 
$\mathbb{R}^m.$ Indeed, let us substitute an expansion
 \eqref{eq6} in the system
\eqref{eq5} and apply the method of undetermined
coefficients taking into account the condition {\bf (A)}.
Then we get the recurrent sequence of linear  system of algebraic equations (SLAE) with respect to the vectors $x_j:$
\begin{equation}
B(0)x_0 = f^{\prime}(0),
\label{eq7}
\end{equation}

\begin{equation}
B(j)x_j = M_j(x_0, \dots , x_{j-1}), \, j=1,\dots , N.
\label{eq8}
\end{equation}
Vector $M_j$ can be expressed in a certain way through the solutions
$x_0, ... , x_{j-1}$ of the previous systems and coefficients 
of the ``Taylor polynomials'' from the condition {\bf (A)}.

Since in the regular case $\det B(j) \neq 0,$ then vectors
$x_0, \dots , x_N$ can be uniquely determined and asymptotic \eqref{eq6}
can be therefore constructed.

\subsection{Irregular case: equation $L(j) = 0$ has 
integer roots.}

Let us introduce the definitions:

\noindent {\bf Definition 1.}
We call quantity $j^*$ {\it the regular point of the matrix} $B(j)$
if matrix $B(j^*)$ is invertible.

\noindent {\bf Definition 2.}
We call quantity  $j^*$  {\it simple singular point of the matrix} $B(j)$
if  $\det B(j^*) = 0$,
$\det \bigl[(B^{(1)}(j^*)\phi_i, \psi_l) \bigr]_{i,l=1}^r \neq 0,$
where $\{\psi_i\}_1^r$ is basis
in $N(B^{\prime}(j^*)),$ $B^{\prime}(j^*)$ is transposed matrix, and
$B^{(1)}(j)$ is derivative of the matrix wrt $j.$



\noindent {\bf Definition 3.}
We call quantity $j^*$  $k+1$-multiple singular point of the matrix $B(j)$
if $\det B(j^*) = 0,$ derivatives
$B^{(1)}(j^*), \dots , B^{(k)}(j^*)$ are zero matrices,
$$\det \biggl[(B^{(k+1)}(j^*)\phi_i, \psi_l) \biggr]_{i,l=1}^r \neq 0,$$
 $k \geq 1,$ $\{\phi_i \}_1^r$ is basis in $N(B(j^*)),$
$\{\psi_i\}_1^r$ is basis in  $N(B^{\prime}(j^*)).$

Let us notice that
$B^{(k)}(j) = \sum\limits_{i=1}^{n-1} (\alpha_i^{\prime}(0))^{1+j} a_i^k (K_i(0,0) - K_{i+1}(0,0)),$
where $a_i = \ln \alpha_i^{\prime}(0).$

\begin{remark}
Let $m=1$ (this case correspond to the single equation \eqref{eq1}).
Then $B(j)=L(j).$ Hence in case of the single equation the definition 2 means that
$j$ is the single root of the characteristic equation $L(j)=0,$ 
and definition 3 means that  $j$ is $(k+1)$ multimple root of this equation.
\end{remark}

Let us demonstrate that in irregular case the coefficients $x_j$ are polynomials
in power of  $\ln t$ and depend on arbitrary constants. Order of polynomials
and number of arbitrary constants are associated with 
muliplicity of singular points of the matrices
$B(j)$ and ranks of these matrices.

Indeed, since the coefficient $x_0$ in irregular case may
depend on 
$\ln t,$ then we may employ the method
of undetermined coefficients and seek $x_0$ as solution of
the difference system
\begin{equation}
K_n(0,0) x_0(z) + \sum\limits_{i=1}^{n-1} \alpha_i^{\prime}(0) (K_i(0,0) - K_{i+1}(0,0)) x_0(z+a_i) =
f^{\prime} (0),
\label{eq9}
\end{equation}
where $a_i = \ln \alpha^{\prime}(0), z = \ln t.$

Here there are three cases:

\noindent {\it 1st Case.}\\
$L(0)\neq 0,$ i.e. $\det B(0) \neq 0.$ Then the coefficient $x_0$ does not depend on$z$
and determined uniquely from the SLAE \eqref{eq7}
with invertable matrix $B(0).$

\noindent {\it 2nd Case.}\\
Let $j=0$ be  the simple singular point of the matrix $B(j).$ We seek the coefficient $x_0(z)$
from the system \eqref{eq9}
as linear vector-function 
\begin{equation}
x_0(z) = x_{01} z + x_{02}.
\label{eq10}
\end{equation}
Let us substitute  \eqref{eq10} in \eqref{eq9}, and we get the following two
SLAE for determination of the vectors $x_{01}, x_{02}$: 


\begin{equation}
B(0) x_{01} = 0,
\label{eq11}
\end{equation}

\begin{equation}
B(0) x_{02} + B^{(1)}(0) x_{01} = f^{\prime}(0).
\label{eq12}
\end{equation}

Here $\det B(0) = 0,$ $\{\phi_i\}_1^r$ is basis in $N(B(0)).$
Hence $x_{01} = \sum\limits_{k=1}^r c_k \phi_k.$
Vector $c = (c_1, \dots , c_r)^{\prime}$
can be determined uniquely 
from the conditions of the system 
\eqref{eq12} resolvability, i.e. fro the SLAE
$$\sum\limits_{k=1}^r (B^{(1)}(0)\phi_k, \psi_i) c_k  = (f^{\prime}(0),\psi_i), \, i=\overline{1,r}$$
with non-singular matrix.
The coefficient $x_{02}$ can be determined from the system \eqref{eq12}
with accuracy up to $\mathrm{span}(\phi_1, \dots , \phi_r).$
Therefore, in 2nd case the coefficient $x_0(z)$
will depend on  $r$ arbitrary constants and linear wrt
$z.$\\

\noindent {\it 3rd Case. } Let $j=0$ be the singular point of the matrix $B(j)$ of multiplicity $k+1,$
where $k \geq 1.$ Solution $x_0(z)$ to the difference system \eqref{eq9}
we search as polynomial
\begin{equation}
x_0(z) = x_{01} z^{k+1} + x_{02} z^k + \dots + x_{0 k+1} z + x_{0 k+2}.
\label{eq13}
\end{equation} 
Let us substitute the polynomial \eqref{eq13}
into the system \eqref{eq9}. We next take into account the equality $$\frac{d^k}{dj^k}B(j) = \sum\limits_{i=1}^{n-1} (\alpha_i^{\prime}(0))^{1+j} a_i^k (K_i(0,0) - K_{i+1}(0,0)),$$
where $a_i = \ln \alpha_i^{\prime}(0)$
and equation  coefficients of powers $z^{k+1}, z^k, \hdots , z, z^0$ to zero
yields the recursive sequence of linear algebraic equation with respect to the
coefficients $x_{01}, x_{02}, \dots , x_{0 k+2}:$\\
$B(0) x_{01} = 0,$\\
$B(0) x_{02} + B^{(1)}(0) 
\biggl(\begin{array}{c}
	k+1\\ k
	\end{array}
	\biggr) x_{01} = 0,$\\
	$B(0) x_{0 l+1} + B^{(l)}(0) \biggl(\begin{array}{c}
	k+1\\ k+1-l
	\end{array}
	\biggr) x_{01} + B^{(l-1)}(0) \biggl(\begin{array}{c}
	k\\ k+1-l
	\end{array}
	\biggr) x_{02} + \hdots  \\ \hdots + B^{(1)}(0) 
	\biggl(\begin{array}{c}
	k+1-l+1\\ k+1-l
	\end{array}
	\biggr) x_{ol} = 0, \, l=1,\dots , k,
	$
\begin{equation}
B(0) x_{0 k+2} + B^{(k+1)}(0) x_{01} + B^{(k)} x_{02} + \dots B^{(1)}(0) x_{0 k+1} = f^{\prime}(0).
\label{eq14}
\end{equation}

Since in this case due to 
the conditions of the definition 3 the derivatives $\frac{d^iB(j)}{dj^i}\biggr|_{j=0},$
 $i=1,...,k $ are zero matrices, then 
$$x_{0i} = \sum\limits_{j=1}^r c_{ij} \phi_j, i=1,\dots, k+1. $$ 
As result the system \eqref{eq14} can be rewritten as
\begin{equation}
B(0)x_{0,k+2} + B^{(k+1)}(0) x_{01} = f^{\prime}(0).
\label{eq15}
\end{equation}
Since $\det \biggl[ (B^{(k+1)}(0)\phi_i, \psi_k) \biggr]_{i,k=\overline{1,r}} \neq 0,$
then vector $c^1 \stackrel{\mathrm{def}}{=} (c_{11}, \dots , c_{1r})^{\prime} $
can be uniquly determined from
the conditions of resolvability of the system \eqref{eq15}.
Therefore, 
$$ x_{0\, k+2} = \sum\limits_{j=1}^r c_{{k+2}\, j} \phi_j + \hat{x}_{k+2}, $$
$\hat{x}_{k+2}$ is the particular solution to the SLAE \eqref{eq15}.
 Vector $c^{k+2} \stackrel{\mathrm{def}}{=}  (c_{k+2,1}, \dots c_{k+2,r})^{\prime},$
and vectors $c^2, \dots , c^{k+1},$
remain arbitrary. Therefore in 3rd case the coefficient $x_0(z)$
is polynomial of power of  $k+1$ wrt $z$ and it depends on
$r \times (k+1) $ arbitrary constants.
One may apply the method of undetermined 
coefficients and take into account the following equality
$$\int t^j \ln^k t dt = t^{j+1} \sum\limits_{s=0}^k (-1)^s \frac{k(k-1) \dots (k-(s-1))}{(j+1)^{s+1}} \ln^{k-s} t, $$
and construct the system of difference
equations for determination of the coefficent
$x_1(z)$  ($z=\ln t$).
Indeed,
\begin{equation}
F(x)\biggr |_{x=x_0+x_1 t} \stackrel{\mathrm{def}}{=} \biggl[ K_n(0,0)x_1(z) + \sum\limits_{i=1}^{n-1}
(\alpha_i^{\prime}(0))^2 (K_i(0,0) -
\label{eq16}
\end{equation} 
$$K_{i+1}(0,0)) x_1(z+a_i) + P_1(x_0(z)) \biggr] t + r(t), \,\,\,\,\, r(t) = o(t).$$
Here $P_1(x_0(z))$ is certain polynomial on $z,$ which power is equal to the multiplicity
of the singular point $j=0$ of the matrix $B(j).$
From \eqref{eq16} due to the estimate $r(t)=o(t)$
for $t\rightarrow 0$ it follows that the coefficient $x_1(z)$
must satisfy the the following system of difference equations

\begin{equation}
K_n(0,0) x_1(z) + \sum\limits_{i=1}^{n-1} (\alpha^{\prime}(0))^2 \bigl(K_i(0,0) - K_{i+1}(0,0)\bigr) x_1(z+a_i)+
\label{eq17}
\end{equation}
$$+ P_1(x_0(z)) = 0. $$
If $j=1$ is regular point of the matrix $B(j),$ then system \eqref{eq17}
has solution $x_1(z)$ as polynomial of the same order as
multiplicity of the singular point $j=0$
of the matrix $B(0).$
If $j=1$ is singular point of the matrix $B(j),$ then solution $x_1(z)$
can be constructed as polynomial of power  $k_0 +k_1,$
where $k_0$ and $k_1$ are multiplicity of singular point $j=0$ and $j=1$
of the matrix $B(j)$ correspondingly. The coefficient $x_1(z)$  depends on 
$r_0 k_0 + r_1 k_1$
arbitrary constants, where $r_0 = \mathrm{dim} N(B(0)),$
$r_1 = \mathrm{dim} N(B(1)).$

Let us impose the following  condition
\begin{enumerate} [{\bf (B)}]
\item
Let matrix $B(j)$ 
has only the regular points in the array $(0,1, \dots , N)$ or has singular points 
of multiplicities $k_j.$
\end{enumerate}

 Then, in a similar way we can calculate the remaining coefficients
$x_2(z), \dots , x_N(z)$ of $\hat{x}(t)$
from the sequence of difference equations
$$K_n(0,0)x_j(z) + \sum\limits_{i=1}^{n-1} (\alpha^{\prime}(0))^{1+j}\bigl(K_i(0,0)-K_{i+1}(0,0)\bigr)x_j(z+a_i)+$$
$$+\mathcal{P}_j(x_0(z), \dots , x_{j-1}(z)))=0, \, j=\overline{2,N}.$$

Hence we have the following\\
\begin{lemma}
Let conditions {\bf (A)}, {\bf (B)} be fuilfilled.  Then exists vector function
$\hat{x}(t) = \sum\limits_{i=0}^N x_i (\ln t) t^i,$ such as for $t \rightarrow +0$
the following estimate is satisfied
$|F(\hat{x}(t))|_{\mathbb{R}^m} = o(t^N)$
and the coefficients $x_i (\ln t)$ are polynomials of  $\ln t$
with insreasing powers smaller then sum $\sum\limits
_j k_{j}$ of the singular points $j$ of the matrix $B(j)$ from the array $(0,1,\dots , i).$
Coefficients $x_i(\ln t)$ depend on $\sum\limits_{j=0}^i \mathrm{dim} N(B(j)) k_j$
arbitrary constants.
\end{lemma}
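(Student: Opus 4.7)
The plan is to prove the lemma by induction on $i$, constructing each coefficient $x_i(\ln t)$ so that the partial sum $\sum_{\ell=0}^{i} x_\ell(\ln t) t^\ell$ annihilates the operator $F$ up to order $o(t^i)$. The base case $i=0$ is already settled by the three subcases in Section~1.1–1.2 (regular, simple singular, multiplicity-$(k+1)$ singular point of $B(0)$): each produces an explicit polynomial $x_0(z)$ in $z=\ln t$ of degree at most $k_0$ depending on $r_0 k_0$ arbitrary constants, where $r_0=\dim N(B(0))$, satisfying the difference system \eqref{eq9}.

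For the inductive step, assume $x_0(z),\dots,x_{i-1}(z)$ are in hand. The plan is to substitute $\sum_{\ell=0}^{i-1} x_\ell(\ln t) t^\ell + x_i(\ln t) t^i$ into $F$ from \eqref{eq5}, replacing $K_\ell$, $\alpha_\ell$, $f$ by their ``Taylor polynomials'' from condition \textbf{(A)}, and use the antiderivative identity
$$\int t^j \ln^k t \, dt = t^{j+1}\sum_{s=0}^{k}(-1)^s \frac{k(k-1)\cdots(k-s+1)}{(j+1)^{s+1}}\ln^{k-s} t$$
to reduce the integral terms to polynomials in $z$ times powers of $t$. Collecting the coefficient of $t^i$ and equating it to zero yields a difference system of the form
\begin{equation*}
K_n(0,0)x_i(z) + \sum_{\ell=1}^{n-1}(\alpha_\ell'(0))^{1+i}(K_\ell(0,0)-K_{\ell+1}(0,0))x_i(z+a_\ell) + \mathcal{P}_i(z) = 0,
\end{equation*}
where $\mathcal{P}_i$ is a known polynomial in $z$ built from $x_0,\dots,x_{i-1}$ and the Taylor data, whose degree is at most $\sum_{j<i,\,L(j)=0} k_j$ by the inductive hypothesis.

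The key step is to show this difference system admits a polynomial solution $x_i(z)$ of degree $\deg\mathcal{P}_i + k_i$ (with $k_i=0$ when $i$ is a regular point of $B(j)$) depending on $r_i k_i$ new arbitrary constants, where $r_i=\dim N(B(i))$. If $j=i$ is regular, then the operator on the left acts on each monomial $z^p$ by sending it to $B(i)z^p$ plus lower-order terms from the Taylor shift $z\mapsto z+a_\ell$; invertibility of $B(i)$ allows descending solution by the method of undetermined coefficients, introducing no new free constants. If $j=i$ is singular of multiplicity $k_i$, then by condition \textbf{(B)} and the formula $B^{(k)}(j)=\sum_{\ell=1}^{n-1}(\alpha_\ell'(0))^{1+j}a_\ell^k(K_\ell(0,0)-K_{\ell+1}(0,0))$, the first $k_i$ derivatives of $B$ at $i$ vanish, so the operator raises the degree in $z$ by exactly $k_i$ and one repeats the argument of the 3rd case of Section~1.2 verbatim: one takes an ansatz of degree $\deg\mathcal{P}_i + k_i$, equates powers of $z$ to obtain a recursive SLAE in which the leading $k_i$ coefficients live in $N(B(i))$, and applies the Fredholm alternative using the nondegeneracy of $\det[(B^{(k_i+1)}(i)\phi_\alpha,\psi_\beta)]$ from Definition~3 to solve at the bottom stage. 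Summing over $j\le i$ gives the claimed total of $\sum_{j=0}^{i} r_j k_j$ arbitrary constants.

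The main obstacle is the bookkeeping required to show that the functional shift $z\mapsto z+a_\ell$ arising from $x_\ell(\ln\alpha_\ell(t))(\alpha_\ell(t))^i$ and the Taylor remainders of $\alpha_\ell,K_\ell,f$ contribute only to $\mathcal{P}_{i'}$ for $i'\ge i$ and not to the principal difference operator at the current stage. This has to be checked by separating $\alpha_\ell(t)=\alpha_\ell'(0)t(1+O(t))$, writing $\ln\alpha_\ell(t)=z+a_\ell+O(t)$ and expanding $x_\ell$ in its Taylor series in $z$, and tracking that each additional factor of $t$ shifts the contribution to a later coefficient. Once this is done, condition \textbf{(A)} ensures the residue $F(\hat x(t))$ is $o(t^N)$ at $t\to +0$, completing the induction.
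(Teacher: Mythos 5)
Your proposal is correct and follows essentially the same route as the paper: the lemma is established there by exactly this recursive construction, substituting the logarithmic--power ansatz into \eqref{eq5}, using the antiderivative identity for $\int t^j \ln^k t\, dt$ to reduce to the difference systems with matrix symbol $B(j)$, and resolving each stage by the method of undetermined coefficients together with the solvability conditions of Definitions~2 and~3 in the singular cases. Your added attention to the bookkeeping of the shift $z\mapsto z+a_\ell$ and the Taylor remainders is a welcome explicit check of a step the paper leaves implicit, but it does not change the argument.
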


\section{Existence of continuous parameter family of solutions theorem}

Since $0 \leq \alpha_i^{\prime}(0) <1, \, \alpha_i(0)=0, \, i=\overline{1,n-1},$
then for any $0<\varepsilon <1$
exists $T^{\prime} \in(0,T]$
such as  $\max\limits_{i=\overline{1,n-1}, t\in [0,T^{\prime}]} |\alpha_i^{\prime} (t)| \leq \varepsilon$
and $\sup\limits_{i=\overline{1,n-1}, t\in (0,T^{\prime}]} \frac{\alpha_i(t)}{t} \leq \varepsilon.$

Let the following condition be fulfilled

\begin{enumerate} [{\bf (C)}]
\item Let $\det K_n(t,t) \neq 0,$ $t\in [0,T^{\prime}]$ and $N^*$
is selected large enough to have the following inequality be satisfied
\begin{equation}
\max\limits_{t\in [0,T]} \varepsilon^{N^*} |K_n^{-1}(t,t)|_{\mathcal{L}(\mathbb{R}^m \rightarrow \mathbb{R}^m)} \times
\end{equation}
$$\times \sum\limits_{i=1}^{n-1} |\alpha_i^{(1)}(t)| |K_i(t,\alpha_i(t)) - K_{i+1}(t,\alpha_i(t))| _{\mathcal{L}(\mathbb{R}^m \rightarrow \mathbb{R}^m)}\leq q <1,
$$
 where $|\cdot|_{\mathcal{L}(\mathbb{R}^m \rightarrow \mathbb{R}^m)} $ is norm of the 
 $m \times m$ matrix. 
\end{enumerate}
 
\begin{lemma}
Let the condition  {\bf (C)} be fulfilled.
Let $\mathbb{C}_{(0,T^{\prime}]}$ be class
of the vector functions continuous for $t \in (0,T^{\prime}]$
and and having a limit (possibly infinite) in $t \rightarrow +0.$
 Let in  $\mathbb{C}_{(0,T^{\prime}]}$
exists the element $\hat{x}(t)$ such as the following estimate
$$|F(\hat{x}(t))|_{\mathbb{R}^m} = o(t^N), N \geq N^*$$
is fulfilled. 
Then equation \eqref{eq5} in $\mathbb{C}_{(0,T^{\prime}]}$
has the solution
\begin{equation}
x(t) = \hat{x}(t) + t^{N^*} u(t),
\label{eq19}
\end{equation}
where $u(t)\in \mathbb{C}_{[0,T^{\prime}]}$
and can be uniquely defined with successive approximations.
\end{lemma}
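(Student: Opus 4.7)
The plan is to substitute the ansatz \eqref{eq19} into the integral-functional system \eqref{eq5}, reduce the problem to a fixed-point equation $u = g + Au$ on the Banach space $\mathbb{C}_{[0,T^{\prime}]}$, and then apply the Banach contraction principle.

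First I would plug $x(t) = \hat{x}(t) + t^{N^*} u(t)$ into $F(x) = 0$. By linearity of $F$ in $x$, one obtains $F(\hat x)(t)$ plus the analogous expression in $t^{N^*} u$; dividing by $t^{N^*}$ and premultiplying by $K_n^{-1}(t,t)$ (which exists on $[0,T^{\prime}]$ by condition (C)) yields
\[
u(t) = g(t) + (A_1 u)(t) + (A_2 u)(t),
\]
where $g(t) = -t^{-N^*} K_n^{-1}(t,t) F(\hat x)(t)$, the functional-shift operator is
\[
(A_1 u)(t) = -\sum_{i=1}^{n-1}\alpha_i^{\prime}(t) K_n^{-1}(t,t)\bigl\{K_i(t,\alpha_i(t))-K_{i+1}(t,\alpha_i(t))\bigr\} \bigl(\alpha_i(t)/t\bigr)^{N^*} u(\alpha_i(t)),
\]
and $A_2$ is the Volterra integral operator whose kernel on the strip $(\alpha_{i-1}(t),\alpha_i(t)]$ is $K_n^{-1}(t,t) K_i^{(1)}(t,s)(s/t)^{N^*}$. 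The hypothesis $|F(\hat x)(t)|_{\mathbb{R}^m} = o(t^N)$ with $N \geq N^*$ forces $t^{-N^*} F(\hat x)(t) \to 0$ as $t\to 0^+$, so $g$ extends continuously to $[0,T^{\prime}]$ with $g(0)=0$ and thus lies in $\mathbb{C}_{[0,T^{\prime}]}$.

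Next I would prove that $A := A_1 + A_2$ is a contraction on $\mathbb{C}_{[0,T^{\prime}]}$ equipped with the Bielecki-type weighted norm $\|u\|_\beta = \sup_{t\in[0,T^{\prime}]} e^{-\beta t}|u(t)|_{\mathbb{R}^m}$. Using $\alpha_i(t)/t \le \varepsilon$ (which follows from $\alpha_i(0)=0$ and the bound on $\alpha_i^{\prime}$) together with the inequality of condition (C), the functional part satisfies $|(A_1 u)(t)|_{\mathbb{R}^m} \le q \sup_{\tau \in [0,\varepsilon t]} |u(\tau)|_{\mathbb{R}^m}$, hence
\[
e^{-\beta t}|(A_1 u)(t)|_{\mathbb{R}^m} \le q\, e^{-\beta(1-\varepsilon)t}\|u\|_\beta \le q\|u\|_\beta.
\]
Bounding $|K_n^{-1}(t,t)K_i^{(1)}(t,s)|_{\mathcal{L}(\mathbb{R}^m\to\mathbb{R}^m)} \le M$ on the compact set $\{0\le s\le t\le T^{\prime}\}$ gives
\[
e^{-\beta t}|(A_2 u)(t)|_{\mathbb{R}^m} \le M\, e^{-\beta t}\int_0^t e^{\beta s}\,ds\,\|u\|_\beta \le \frac{M}{\beta}\|u\|_\beta.
\]
Combining the two estimates yields $\|Au\|_\beta \le (q + M/\beta)\|u\|_\beta$; any choice $\beta > M/(1-q)$ then makes $A$ a strict contraction on $(\mathbb{C}_{[0,T^{\prime}]},\|\cdot\|_\beta)$.

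Finally, the Banach fixed point theorem applied to the affine map $u \mapsto g + Au$ produces the unique $u \in \mathbb{C}_{[0,T^{\prime}]}$ as the limit of the successive approximations $u_0 = g$, $u_{k+1} = g + A u_k$, and the corresponding $x(t) = \hat x(t) + t^{N^*} u(t)$ is the claimed solution \eqref{eq19} of \eqref{eq5}. The main obstacle is that the purely functional shift $A_1$ cannot be tamed by the classical Volterra quasi-nilpotency argument applied to $A_2$ alone; the role of the exponential weight is precisely to combine the two parts on equal footing, keeping the $A_1$ contraction factor below $q<1$ through the gain $\alpha_i(t)/t \le \varepsilon$ supplied by (C), while simultaneously making the $A_2$ contribution as small as needed by enlarging $\beta$.
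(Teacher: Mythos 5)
Your proposal is correct and follows essentially the same route as the paper: the same decomposition of the equation for $u$ into a functional-shift operator and a Volterra integral operator, the same exponentially weighted (Bielecki) norm, the same use of condition {\bf (C)} together with $\alpha_i(t)/t\le\varepsilon$ to bound the shift part by $q<1$, and the same enlargement of the weight parameter to make the integral part small enough for a contraction. Your write-up is in fact somewhat more explicit than the paper's (e.g.\ in verifying that $t^{-N^*}F(\hat x)(t)$ extends continuously with value $0$ at $t=0$), but the argument is the same.
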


\begin{proof}
Let us substitute \eqref{eq19} in the equation \eqref{eq5}.
We get the following integral-functional system for determination of $u(t)$ 
\begin{equation}
K_n(t,t) u(t) + \sum\limits_{i=1}^{n-1} \alpha_i^{\prime}(t)
\biggl(\frac{\alpha_i(t)}{t} \biggr)^{N^*}  \biggl( K_i(t, \alpha_i(t))  -
\label{eq20}
\end{equation}
$$
-K_{i+1}(t,\alpha_i(t)) \biggr) u(\alpha_i(t)) +$$ $$+\sum\limits_{i=1}^n
\int\limits_{\alpha_{i-1}(t)}^{\alpha_{i}(t)} K_i^{(1)}(t,s) \biggl(\frac{s}{t}\biggr)^{N^*} u(s) ds
+ F(\hat{x}(t)) / t^{N^*} = 0.
$$
Let us introduce the linear opearators:
$$
Lu \stackrel{\mathrm{def}}{=}  K_n^{-1}(t,t)  \sum\limits_{i=1}^{n-1}
\alpha_i^{\prime}(t) \biggl(\frac{\alpha_i(t)}{t} \biggr)^{N^*}
\biggl \{ K_i(t, \alpha_i(t))  -$$
$$-K_{i+1}(t,\alpha_i(t)) \biggr\} u (\alpha_i(t)), $$
$$K u \stackrel{\mathrm{def}}{=} \sum\limits_{i=1}^n \int\limits_{\alpha_{i-1}(t)}^{\alpha_i(t)}
K_n^{-1}(t,t) K_i^{(1)}(t,s) (s/t)^{N^*} u(s) ds. $$
Then we can present the system \eqref{eq20} as follows
$$u+ (L+K) u  = \gamma(t), $$
where $\gamma(t) = K_n^{-1}(t,t) F(x^N(t))/t^{N^*}$ is continuous 
vector function.
Let us introduce banach space $X$ of continuous vector-functions $u(t)$
with norm $$ ||u||_l = \max\limits_{0\leq t \leq T^{\prime}} e^{-lt} |u(t)|_{\mathbb{R}^m}, \, l>0.$$
Hence because of  inequalities $\sup\limits_{t \in [0,T]} \frac{\alpha_i(t)}{t} \leq \varepsilon <1$
and condition {\bf C)} for $\forall l \geq 0$ 
norm of linear functional operator  $L$ satisfies the following estimate
$$||L ||_{\mathcal{L}(X \rightarrow X)} \leq q < 1.$$
Moreover for integral operator  $K$
for large enough $l$ the following estimate is fulfilled
$$||K ||_{\mathcal{L}(X \rightarrow X)} \leq q_1 < 1-q.$$
As result for large enough $l>0$ we have
$$||L+K ||_{\mathcal{L}(X \rightarrow X)} < 1,$$
i.e. the linear operator $L+K$
is contracting in  $X.$
Hence the sequence 
 $u_n = -(L+K)u_{n-1} + \gamma(t), \, u_0=\gamma(t)$
converges.

\end{proof}

\begin{theorem}{\it (Main Theorem).}
Let the following conditions {\bf (A)}, {\bf (B)}, {\bf (C)}, $f(0)=0$ be fulfilled. 
Let also the matrix $B(j)$ has exactly $\nu$ singular points $j_1, \dots , j_{\nu}$
of multiplicities $k_i,$ $i=\overline{1,\nu},$ 
in the array $(0,1,\dots , N))$ and the rest of the points of the array are regular.
Let $rank B(j_i) = r_i,$ $i=\overline{1,\nu}$. 

Then equation \eqref{eq1} for $0<t\leq T^{\prime} \leq T$ has he solution
$$ x(t) = \hat{x}(t) + t^{N^*} u(t), $$
depending on  $\sum\limits_{i=1}^{\nu} (m-r_i) k_i$
arbitrary constants.

\end{theorem}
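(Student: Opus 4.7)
The strategy is to glue together Lemmas 1.1 and 2.1, linked by the equivalence of \eqref{eq1} and \eqref{eq5}. I will first observe that, since $f(0)=0$ and all data are differentiable, any continuous solution of \eqref{eq5} on $(0,T']$ integrates back to a solution of \eqref{eq1} (the vanishing of $f$ at the origin kills the constant of integration), so it is enough to solve \eqref{eq5} in $\mathbb{C}_{(0,T']}$.

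Next, I will invoke Lemma 1.1 with $N$ chosen to be at least $N^*$ from condition \textbf{(C)}. Hypotheses \textbf{(A)} and \textbf{(B)} are precisely what Lemma 1.1 demands, so it produces a polynomial-logarithmic asymptotic part
\[
\hat{x}(t) = \sum_{i=0}^{N} x_i(\ln t)\, t^i, \qquad |F(\hat{x}(t))|_{\mathbb{R}^m} = o(t^N).
\]
The parameter count packaged in Lemma 1.1 says that the free constants in $\hat{x}$ total $\sum_{j=0}^N \dim N(B(j))\,k_j$; since only the $\nu$ singular points $j_1,\dots,j_\nu$ contribute a nontrivial null space and at each of them $\dim N(B(j_i)) = m - r_i$ by rank-nullity, this cumulative count reduces to exactly $\sum_{i=1}^\nu (m-r_i)k_i$.

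Because $|F(\hat{x}(t))|_{\mathbb{R}^m} = o(t^N)$ with $N \geq N^*$, Lemma 2.1 applies directly and yields the correction $t^{N^*}u(t)$ with $u\in\mathbb{C}_{[0,T']}$ uniquely determined from $\hat{x}$ through the contraction $u_n = -(L+K)u_{n-1} + \gamma$ in the weighted Banach space $X$. Thus $x(t) = \hat{x}(t) + t^{N^*}u(t)$ solves \eqref{eq5}, hence \eqref{eq1}, on $(0,T']$. Since the map $\hat{x}\mapsto u$ is single-valued, no new parameters are introduced at this stage, and every arbitrary constant in the $\nu$-tuple produced by Lemma 1.1 survives as a genuine parameter of the family of solutions.

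The main obstacle is to confirm that distinct choices of the arbitrary constants in $\hat{x}$ really deliver distinct solutions of \eqref{eq1}: one must rule out the possibility that a change $\hat{x}_1 \mapsto \hat{x}_2$ is absorbed by a compensating change in the remainder $t^{N^*}u(t)$. This is handled by the observation that $\hat{x}_1-\hat{x}_2$ is a nonzero logarithmic-polynomial expression of the form $\sum_{i\le N}\bigl(\sum_k c_k\phi_k\bigr)p_i(\ln t)\,t^i$ with $\phi_k$ in the relevant null spaces $N(B(j_i))$, while any difference of admissible correctors $t^{N^*}(u_1-u_2)$ is $O(t^{N^*})$ with continuous coefficient; the two growth types cannot match on $(0,T']$. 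Combining all of the above yields the existence of a $\sum_{i=1}^\nu (m-r_i)k_i$-parameter family of continuous solutions of \eqref{eq1}, which is the claim of the theorem.
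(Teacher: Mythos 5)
Your proposal is correct and follows essentially the same route as the paper: the paper's proof is exactly the two-lemma gluing you describe, invoking Lemma 1 (under \textbf{(A)}, \textbf{(B)}) for the logarithmic-power asymptotics $\hat{x}(t)$ with the stated number of arbitrary constants, and Lemma 2 (under \textbf{(C)}) for the correction $t^{N^*}u(t)$ via successive approximations. You in fact supply more detail than the paper does, e.g.\ the rank--nullity reduction of $\sum_j \dim N(B(j))k_j$ to $\sum_{i=1}^{\nu}(m-r_i)k_i$ and the argument that distinct constants yield distinct solutions, both of which the paper leaves implicit.
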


\begin{proof}
Due to the conditions of the Lemma 1 and imposed conditions of the theorem
it make it possible the construction of asymptotic
approximation
$\hat{x}(t)$ of the desired solution in the from of logarithmic power polynomial $\sum\limits_{i=0}^N x_i (\ln t) t^i$.
Moreover the coefficients $x_i(\ln t)$  will depend on the specified number of
arbitrary constants.
Because of the Lemma 2 we can apply the substitution $x(t) = \hat{x}(t)+
t^{N^*} u(t),$ and continuous function $u(t)$ 
can be constructed with successive approximations.

\end{proof}

\begin{remark}
In the main theorem's conditions for the asymptotic approximation $\hat{x}(t)$ of the desired solution the following asymptotic estimate
$|x(t) - \hat{x}(t)|_{\mathbb{R}^n} = \mathcal{O}(t^{N^*}), \, t\rightarrow +0$
is fulfilled.
\end{remark}

\begin{corollary}
Let $\alpha_i(t) = \alpha_i t,$  $i=\overline{1,n-1},$  $0< \alpha_1 < \alpha_2 < \hdots < \alpha_{n-1} <1, $
elements of the matrices $K_i(t,s),$  $i=\overline{1,n-1}$ and vector function $f(t)$
has an analytic extension to $|s| < T, \, |t| < T,$ $ f(0)=0.$
 Matrix $K_n(t,t)^{-1}$ is analytical for $|t|<T.$
Let $\det B(j) \neq 0,$ $j\in \mathbb{N} \cup {0}. $
Then equation \eqref{eq1} has unique solution $x(t) = \sum\limits_{i=0}^{\infty} x_i t^i$ for
$0\leq t < T.$
\end{corollary}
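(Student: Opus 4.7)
Under the hypothesis $\det B(j)\neq 0$ for every $j\in\mathbb{N}\cup\{0\}$, we sit in the regular case of Subsection~1.1 at every stage, so the recursion \eqref{eq7}--\eqref{eq8} determines unique \emph{constant} (logarithm-free) coefficients $x_0,x_1,\dots$. The overall plan is: (a) package these into a formal power series $\hat x(t)=\sum_{j\ge 0}x_j t^j$; (b) establish a uniform bound $|B(j)^{-1}|\le C$; (c) run a majorant argument to show the formal series converges on $|t|<T$; (d) identify its sum with the continuous solution produced by the Main Theorem (which in this case has zero free parameters because every $r_i=0$) and obtain uniqueness from the no-freedom nature of the recursion.

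For (b), note that $0<\alpha_i<1$ forces $\alpha_i^{1+j}\to 0$ and hence $B(j)\to K_n(0,0)$; analyticity of $K_n(t,t)^{-1}$ at $t=0$ makes $K_n(0,0)$ invertible, and combining this with $\det B(j)\neq 0$ at every individual $j$ yields the uniform bound. As Cauchy input for (c), analyticity of $K_i(t,s)$, $f(t)$ and $K_n(t,t)^{-1}$ on the polydisc $|t|,|s|<T$ bounds each of their Taylor coefficients, at any scale $\rho<T$, by $A\rho^{-(\nu+\mu)}$.

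For (c), substituting $\hat x(t)=\sum x_j t^j$ into \eqref{eq5} and matching powers of $t$ yields $B(j)x_j=M_j(x_0,\dots,x_{j-1})$, where $M_j$ is a finite linear combination whose weights are products of Taylor coefficients of the data with geometric factors $\alpha_i^{j+1}$ (from the functional term $x(\alpha_i t)$) and $\alpha_i^{\ell+\mu+1}/(\ell+\mu+1)$ (from the integrals $\int_{\alpha_{i-1}t}^{\alpha_i t}K_i^{(1)}(t,s)x(s)\,ds$). Because every $\alpha_i<1$ and $|B(j)^{-1}|\le C$, an induction against a geometric majorant of the form $|x_j|\le M\rho^{-j}$ closes for every $\rho<T$. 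Once convergence is in hand, the sum is analytic on $|t|<T$, solves \eqref{eq1} because it annihilates every Taylor coefficient of \eqref{eq5} (equivalent to \eqref{eq1} via $f(0)=0$), and is unique because \eqref{eq7}--\eqref{eq8} leaves no free parameter.

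The main technical obstacle is precisely the majorant estimate in step (c): one must carefully track how the gains $\alpha_i^{j+1}$ and $\alpha_i^{\ell+\mu+1}/(\ell+\mu+1)$ combine with the Cauchy-type bounds on the Taylor coefficients of the kernels and with the uniform bound on $B(j)^{-1}$, so that the induction hypothesis $|x_\ell|\le M\rho^{-\ell}$ propagates to step $j$ for every $\rho<T$. Steps (a), (b) and (d) are comparatively routine: (a) simply reads off the regular-case recursion, (b) uses only continuity and invertibility, and (d) is a direct appeal to the Main Theorem together with the observation that the regular recursion admits no arbitrary constants.
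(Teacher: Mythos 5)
The paper offers no proof of this corollary at all: it is stated bare, immediately after the Main Theorem, and is evidently intended as the degenerate case of that theorem in which every point $j$ is regular, so the recursion \eqref{eq7}--\eqref{eq8} produces constant, logarithm-free coefficients and the count $\sum_i (m-r_i)k_i$ of arbitrary constants is zero. Your proposal is therefore not merely a restatement of the paper's argument but a genuine attempt to supply what the paper omits, and its architecture is the right one. In particular you correctly notice two things the bare appeal to Theorem 1 does not deliver: (i) Theorem 1 only produces a continuous solution on some $(0,T']$ with $T'\leq T$, whereas the corollary asserts an analytic solution on all of $[0,T)$, so some convergence argument for the full series is genuinely needed; and (ii) the uniform invertibility of $B(j)$, which you obtain from $\alpha_i^{1+j}\to 0$ and the invertibility of $K_n(0,0)$ (guaranteed by the analyticity of $K_n(t,t)^{-1}$), is exactly the ingredient that makes a majorant argument feasible. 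These observations are correct and go beyond the paper.

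The one genuine gap is in step (c), and you have half-identified it yourself. With Cauchy bounds $A\rho^{-(\nu+\mu)}$ on the data and the induction hypothesis $|x_\ell|\leq M\rho^{-\ell}$ at the \emph{same} radius $\rho$, the convolution sums in $M_j$ give $|M_j|\lesssim AMj\rho^{-j}$: the factor $j$ (in general a polynomial in $j$ coming from the number of terms $\nu+\ell=j$) prevents the geometric induction from closing as literally stated. The standard repair is a two-radius argument: take Cauchy bounds at radius $\rho$ and prove the geometric bound $|x_j|\leq M\rho_1^{-j}$ at any $\rho_1<\rho$, absorbing the polynomial factor into $(\rho_1/\rho)^j$; since $\rho<T$ is arbitrary this still yields convergence on all of $|t|<T$. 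Alternatively one compares with the solution of a scalar majorant equation in the classical Cauchy manner. Either fix is routine, but as written the induction does not close, so you should not present the estimate as following from a single-radius geometric hypothesis. With that repair, and with the remark that uniqueness follows because any continuous solution must satisfy the same regular recursion (which admits no free parameters) together with the contraction argument of Lemma 2 for the remainder, your proof is complete and is in fact more informative than the paper's implicit one.
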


In some cases, the conditions of Theorem 1 we can construct a parametric
family of solutions in closed form.

\noindent {\bf Example.}
Let us consider the system $\int\limits_0^{t/2} K x(s) ds + \int\limits_{t/2}^t (K- 2E) x(s) ds = d t,$
$0<t < \infty,$
where $K$ is symmetric constant matrix $m \times m, \, d \in \mathbb{R}^m,$
$x(t) = (x_1(t), ...,x_m(t) )^{\prime},$
$1$ is eigenvalue of the matrix  $K$ of rank $r,$
$\{ \phi_1, ... , \phi_r \}$ is corresponding orthonormal system
of eigenvectors. This system has parametric family of solutions
$$x(t) = -\ln t \sum\limits_{i=1}^r \frac{(d, \phi_i)}{\ln 2}\phi_i + c_1 \phi_1 + ... + c_r \phi_r + \hat{a}.$$
Here $c_1, ..., c_r$ are arbitrary constants, vector $\hat{a}$
satisfies the SLAE
$$(K-E)\hat{a} = d - \sum\limits_{i=1}^r (d, \phi_i) \phi_i. $$


\section{Sufficient conditions for existence of a unique continuous solution}

For sake of clarity in this section let us suppose $\alpha_i(t) = \alpha_i t, \, i=\overline{1,n-1}, \, 0<\alpha_1 < \alpha_2 < ... < \alpha_{n-1} < 1.$
Let us introduce the matrix $$D(t) = \sum\limits_{i=1}^{n-1} \alpha_i K_n^{-1}(t,t)(K_i(t,\alpha_1 t) - K_{i+1}(t,\alpha_1 t)). $$
Let the following condition be fulfilled:

\begin{enumerate} [{\bf (S)}]
\item  $|D(0)|_{\mathcal{L}(\mathbb{R}^m \rightarrow \mathbb{R}^m)} < 1;\,$
$\sup\limits_{0 \leq s \leq t \leq T}|K_n^{-1}(t,t) K(t,s)|_{\mathcal{L}(\mathbb{R}^m \rightarrow \mathbb{R}^m)} \leq c<\infty. $
\end{enumerate}
Here and below the matrix  $K(t,s)$ is defined with formula \eqref{eq2}.

\begin{theorem} { (Sufficient conditions for the existence and uniqueness of solutions).}
Let the conditions {\bf (S)} be fulfilled, all the matrices $K_i(t,s)$ are continuous in \eqref{eq2}
and they are continuously differentiable wrt $t,$ vector $f(t)$
has continuous derivative, $f(0)=0.$
Then an equation \eqref{eq1} has unique solution in  $\mathbb{C}_{[0,T]}.$ 
Moreover, the solution can be found by the method of steps, combining it with the method of successive approximations.
\end{theorem}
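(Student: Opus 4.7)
The plan is to differentiate (\ref{eq1}) as in (\ref{eq5}), multiply through by $K_n^{-1}(t,t)$ to obtain a fixed-point formulation of the form $x = -(L+\mathcal K)x + g$, and then build the solution on $[0,T]$ by first solving on a short initial interval via a weighted-norm contraction and then propagating by the method of steps.

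First, I would set
$$
(Lx)(t) = \sum_{i=1}^{n-1} \alpha_i\, K_n^{-1}(t,t)\bigl[K_i(t,\alpha_i t)-K_{i+1}(t,\alpha_i t)\bigr]\,x(\alpha_i t),
$$
and let $\mathcal{K}$ denote the Volterra integral operator with kernel $K_n^{-1}(t,t)K_i^{(1)}(t,s)$ on each strip $D_i$, together with $g(t) = K_n^{-1}(t,t)f'(t)$. Evaluating the reformulated equation at $t=0$ gives $(I+D(0))x(0)=g(0)$; condition \textbf{(S)} makes $I+D(0)$ invertible by the Neumann series, so $x(0)$ is uniquely determined. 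By continuity of $t\mapsto D(t)$, pick $\delta>0$ such that $\sup_{t\in[0,\delta]}|D(t)|\le q<1$.

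Next, on $C([0,\delta];\mathbb R^m)$ equipped with the weighted norm $\|x\|_l=\max_{t\in[0,\delta]}e^{-lt}|x(t)|$, I would estimate both operators. The crucial observation for $L$ is that $\alpha_i t\le t$ for $l\ge 0$ implies $e^{l\alpha_i t}\le e^{lt}$, whence $\|L\|_l\le \sup_{[0,\delta]}|D(t)|\le q$ uniformly in $l$; meanwhile $\mathcal K$, being a genuine Volterra operator with continuous kernel (bounded by \textbf{(S)}), satisfies the familiar estimate $\|\mathcal K\|_l\le c_1/l\to 0$ as $l\to\infty$. Choosing $l$ large enough that $q+c_1/l<1$ makes $L+\mathcal K$ a contraction on $(C([0,\delta];\mathbb R^m),\|\cdot\|_l)$, so the Banach fixed-point theorem delivers a unique continuous solution on $[0,\delta]$, obtained as the limit of successive approximations.

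Then I would extend by the method of steps. Put $\delta_k=\delta/\alpha_{n-1}^k$; given $x$ on $[0,\delta_{k-1}]$, note that for $t\in[\delta_{k-1},\delta_k]$ each delayed argument $\alpha_i t$ satisfies $\alpha_i t\le\alpha_{n-1}t\le\delta_{k-1}$, so $(Lx)(t)$ is a \emph{known} continuous forcing function on $[\delta_{k-1},\delta_k]$. The equation on this interval then reduces to an ordinary linear Volterra integral equation of the second kind with continuous kernel, uniquely solvable by successive approximations in the same weighted norm, and the values at $\delta_{k-1}$ match by construction. Since $\alpha_{n-1}<1$, finitely many steps cover $[0,T]$; integrating back through (\ref{eq5}) using $f(0)=0$ shows $x$ solves the original equation (\ref{eq1}). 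Uniqueness on $[0,T]$ follows because two solutions coincide on $[0,\delta]$ by the local contraction and then on each subsequent $[\delta_{k-1},\delta_k]$ by the Volterra uniqueness.

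The main obstacle is making $L$ and $\mathcal K$ contract \emph{simultaneously} on one Banach space. The weight $e^{-lt}$ is the only workable choice: increasing $l$ shrinks $\mathcal K$ in the usual Volterra fashion, while the monotonicity of $e^{-lt}$ together with $\alpha_i t\le t$ ensures the delay operator $L$ is \emph{not} inflated by the weight—a norm growing in $t$ would destroy this control. Once this compatibility is secured on the initial short interval where $|D(t)|$ stays close to $|D(0)|<1$, the rest of the argument is a routine iteration of the step method.
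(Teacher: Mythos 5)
Your proposal is correct and follows essentially the same route as the paper: differentiate \eqref{eq1} to get \eqref{eq5}, premultiply by $K_n^{-1}(t,t)$ to obtain $x+(A+K)x=\overline{f}$, solve on a short initial interval by a contraction/successive-approximation argument, and then propagate by the method of steps, using $\alpha_i t\le\alpha_{n-1}t$ to make the delayed arguments land in the already-solved region. The only (inessential) technical differences are that the paper achieves the initial contraction by taking $h<\min\{h_1,(1-q)/c\}$ so that the integral operator is already small in the plain sup norm rather than via your weighted norm $e^{-lt}$, and it uses arithmetic step intervals $[(1+(n-1)\varepsilon)h,(1+n\varepsilon)h]$ with $\alpha_{n-1}\le 1/(1+\varepsilon)$ in place of your geometric ones $\delta_k=\delta/\alpha_{n-1}^{k}$ --- both choices serve the same purpose.
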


\begin{proof}
Let us rewrite the equation \eqref{eq5} which equivalent to the equation \eqref{eq1}
as follows
\begin{equation}
x(t) + A x + K x = \overline{f}(t),
\label{eq21}
\end{equation}
where the following notations are introduced\\
$Ax \stackrel{\mathrm{def}}{=} K_n^{-1}(t,t) \sum\limits_{i=1}^{n-1} \alpha_i (K_i(t,\alpha_i t) - K_{i+1}(t,\alpha_i t)) x(\alpha_i t),$\\
$Kx \stackrel{\mathrm{def}}{=} \sum\limits_{i=1}^n \int\limits_{\alpha_{i-1}t}^{\alpha_i t} K_n^{-1}(t,t) K_t^{(1)}(t,s) x(s)ds,\,\,$
$\overline{f}(t) = K_n^{-1}(t,t) f^{(1)}(t).$

Let us fix $q<1$ and select $h_1>0$ such as $\max\limits_{0\leq t \leq h_1}|D(t)|_{\mathcal{L}(\mathbb{R}^m \rightarrow \mathbb{R}^m)} = q<1.$
Because of the condition {\bf (S)} exists such $h_1>0$.
Let $0<h<\min \{ h_1, \frac{1-q}{c} \},$
where constant $c$ is defined in the condition {\bf (S)}.
Lets divide the interval $ [0, T] $ into the intervals
\begin{equation}
[0,h], \, [h, h+\varepsilon h], \, [h+\varepsilon h, h+ 2\varepsilon h], \hdots
\label{eq22}
\end{equation}
where $\varepsilon$ is selected from $(0,1]$
such as  $\alpha_{n-1} \leq \frac{1}{1+\varepsilon}.$
Denote by $x_0(t)$ the restriction of the desired solution $ x(t) $
on the interval $[0,h]$ and let $x_n(t)$ be the restrictions to intervals
$$I_n = [(1+(n-1)\varepsilon)h, (1+n\varepsilon)h], n=1,2,\dots.$$
Because of the selection of  $\varepsilon$ for $t\in I_n$
the ``perturbed'' argument $\alpha_i t \in \bigcup\limits_{k=1}^{n-1} I_k.$
Such an inclusion makes it possible to apply the known method of steps \cite{lit_elsgoltz}.

For construction of the element $x_0(t) \in \mathbb{C}_{[0,h]}$
we construct the sequence $\{x_0^n(t)\}:$
$$x_0^n (t) = -A x_0^{n-1} - K x_0^{n-1} + \overline{f} (t), $$
$$x_0^0(t) = \overline{f}(t), \, t \in [0,h]. $$
Because of the selection of  $h$ we have an estimate $||A+K||_{\mathcal{L}(\mathbb{C}_{[0,h]} \rightarrow \mathbb{C}_{[0,h]})} < 1.$

Hence exists the unique solution  $x_0(t)$ of the equation \eqref{eq21} for $t\in [0,h]$. Sequence 
$x_0^n(t)$ converges uniformly to that  unique solution  $x_0(t)$.
Let us continue the process for the desired solution
construction for  $t\geq h,$ i.e. on the intervals
$I_n, \, n=1,2,\hdots .$
To be specific let $\varepsilon = 1$ in  \eqref{eq22}.

Once we get $x_0(t) \in \mathbb{C}_{[0,h]}$ computed we constract $x_1(t) $ 
in the space $ \mathbb{C}_{[h,2h]}$.
We find $x_1(t)$ from the Volterra integral equation of the 2nd kind
$$x(t) + \int\limits_h^t K_n^{-1}(t,t) K_t^{\prime}(t,s) x(s) ds = \overline{f}(t) - A x_0 - \int\limits_{0}^h K_n^{-1}(t,t)
K_t^{\prime}(t,s) {x}_0(s) ds$$
using successive approximations. In this case $x_0(h) = x_1(h).$

Let us introduce the continuous function
 \begin{equation}
    \overline{x}_1(t) = \left\{ \begin{array}{ll}
         \mbox{$x_0(t), \,\,\,\, {0} \leq t \leq h$}, \\
         \mbox{$x_1(t), \,\, h \leq t \leq 2h $}, \\
\end{array}          
\right.
 \end{equation}
which is Is the restriction of the desired continuous solution $ x(t) $ on the interval
$[0,2h].$
Then element $x_2(t) \in \mathbb{C}_{[2h, 3h]}$
can be computed with successive approximations
from the Volterra integral equation of the second kind
$$x(t) + \int\limits_{2h}^t K_n^{-1}(t,t) K_t^{\prime}(t,s) x(s) ds = \overline{f}(t) - A \overline{x}_1
- \int\limits_{0}^{2h} K_n^{-1}(t,t) K_t^{\prime}(t,s) \overline{x}_1(s) ds.$$

Finally we constract the desired solution
$x(t) \in \mathbb{C}_{[0,T]}$ to the equation \eqref{eq1} during the $N$ steps ($N \geq \frac{T}{h}$).

\end{proof}

\noindent {\bf Example.}
Integral equation $$  \int\limits_{0}^{t/2} K_1(t-s) x(s) ds + \int\limits_{t/2}^{t} K_2(t-s) x(s) ds = f(t), \, 0< t \leq T, $$
where $K_1(t-s) = K_2(t-s) +E, $ are matrices $m \times m,$ $E$ is unit matrix,
$|K_2^{-1}(0)|_{\mathcal{L}(\mathbb{R}^m \rightarrow \mathbb{R}^m)}<2,$  matrix $K_2(t)$ 
and vector function $f(t)$
have continuous derivatives wrt $t,$ $f(0)=0,$
satisfies the Theorem 2 conditions and has the unique 
continuous solution.


\begin{thebibliography}{99}  

  \bibitem{lit6a} Markova M. et al
  {About Models of Developing Systems of Glushkov Type and Their 
Applications in Electroenergetics,} 
Automation and Remote Control, 7, 20--28 (2011).

  \bibitem{lit8} Yu. Yatsenko {Integral Models of Systems with Controllable Memory} (Naukova Dumka, Kiev, 1991).

  \bibitem{lit6} Apartsyn A.S. {Nonclassical Linear Volterra Equations of the First Kind } (De Gruyter, Walter, 2003)


  \bibitem{lit7}  Denisov A.M.,  Lorenzi A. {On a special Volterra integral equation of the first kind.} Boll. Un. Mat. Ital. B. Vol., {\bf (7)},  9, 443--457 (1995).

    \bibitem{lit9} Marnitsky N.A. { Asymptotics of solutions to
the Volterra integral equations of the 1st kind}, DAN USSR, {\bf 269}, 1, 29--32 (1983).

\bibitem{lit13} Bruno A.D. Asymptotic behaviour and expansions of solutions of an ordinary differential equation, Russian Mathematical Surveys, 2004, 59:3, 429--480	(original text: Uspekhi Mat. Nauk, 59:3(357) (2004), 31--80 )

  \bibitem{lit5} Sidorov N.A. and Sidorov D.N. {Small solutions
of nonlinear differential equations in the neighborhood of branching points}, Izv VUZov. Mathematics, {\bf 5}, 53--61 (2011).

  \bibitem{lit13} Sidorov D.N., Sidorov D.N. {Convex majorants method in the theory of nonlinear Volterra equations,}  Banach J. Math. Anal.  {\bf 6},  1, 1 -- 10 (2012).

  \bibitem{arxiv} Sidorov D.  {Volterra Equations of the First kind with Discontinuous Kernels in the Theory of Evolving Systems Control,} arXiv:1111.5903v1 (2011).


\bibitem{lit1} Sidorov N.A. and Sidorov D.N.  {Existence and construction of generalized solutions of nonlinear volterra integral equations of the first kind}, Differential Equations, {\bf 42},  9, 1312-1316 (2006).

 \bibitem{lit2} Sidorov N.A. and Trufanov A.V.  {Nonlinear operator equations with a functional perturbation of the argument of neutral type}, Differential Equations,  {\bf 45}, 12,  1840-1844 (2009).

   \bibitem{lit12}  Sidorov Trufanov Sidorov ISU 2007. (In Russian)
 
  \bibitem{lit3} Sidorov N.A., Falaleev M.V., Sidorov D.N.  {Generalized solutions of Volterra integral equations of the first kind},  Bull. Malays. Math. Soc., {\bf 29}, 2, 1--5 (2006).
 
\bibitem{lit3_1} Sidorov D. {On impulsive control of nonlinear dynamical systems based on the Volterra series},  10th IEEE International Conference on  Environment and Electrical Engineering (EEEIC), 8-11 May 2011, Rome, Italy, 1--6 (2011). 

\bibitem{lit3_2} Sidorov D. and Sidorov N. {Generalized solutions
in problem of modeling of nonlinear dynamic systems with
the Volterra polynomials,} 
Automations and Remote Control, {\bf 6}, 127--132 (2011).

\bibitem{lit_elsgoltz} Elsgoltz L.E. Qualitative Methods in Mathematical Analysis. Trans. Math. Mono, 12, American Math. Soc. 1964. (Translated from Russian).

\bibitem{lit_gelfond} Gelfond A.O. {The calculus of finite differences}, Fizmatlit, Moscow, 1959, (in Russian).

\bibitem{vain_trenogin}  {M.M. Vainberg, V.A. Trenogin.} Theory of branching of solutions of non-linear equations, Noordhoff, 1974. (Translated from Russian).

\bibitem{lit_trenogin}  { Trenogin V. A.} {Functional Analysis} (Fizmatlit, Moscow, 4th Ed., 2007).





\bibitem{lit5_0} Sidorov N.A., Loginov B.V., Sinitsyn A.V., Falaleev  M.V. {Lyapunov-Schmidt methods in nonlinear analysis and applications} 
  Series on mathematics and its applications (Kluwer Academic Publishers, Dordrecht, 2002).
\bibitem{lit5_1} Sviridyuk G.A., Fedorov V.E. {Linear Sobolev type equations and degenerate semigroups of operators,} Vol. 42 of Inverse and ill-posed problems series (VSP, 2003).

  \bibitem{lit4} N. A. Sidorov, D. N. Sidorov and A. V. Krasnik {Solution of Volterra operator-integral equations in the nonregular case by the successive approximation method},
Differential Equations, 2010, Volume 46, Number 6, Pages 882-891.

\bibitem{litmes} Messina E., Russo E., Vecchio A. { A stable numerical method for Volterra integral equations with discontinuous kernel,} Journal of Mathematical Analysis and Applications.
Vol. 337, Issue 2, 1383--1393 (2008).

\end{thebibliography}
\end{document}